\definecolor{dblue}{rgb}{0,0,0.70}
\newtheorem{theorem}{Theorem}[section]
\newtheorem*{theorem*}{Theorem}
\newaliascnt{lemma}{theorem}
\newtheorem{lemma}[lemma]{Lemma}
\newtheorem*{lemma*}{Lemma}
\newaliascnt{proposition}{theorem}
\newtheorem{proposition}[proposition]{Proposition}
\newaliascnt{corollary}{theorem}
\newtheorem{corollary}[corollary]{Corollary}
\theoremstyle{remark}
\newaliascnt{remark}{theorem}
\newtheorem{remark}[remark]{Remark}
\newaliascnt{question}{theorem}
\newtheorem{question}[question]{Question}
\newtheorem*{question*}{Question}
\newaliascnt{definition}{theorem}
\newtheorem{definition}[definition]{Definition}
\newaliascnt{example}{theorem}
\renewcommand{\restriction}{\mathbin\upharpoonright}
\newcommand{\axiom}[1]{\mathsf{#1}}
\newcommand{\ZFC}{\axiom{ZFC}}
\newcommand{\AC}{\axiom{AC}}
\newcommand{\ZF}{\axiom{ZF}}
\newcommand{\HOD}{\mathrm{HOD}}
\newcommand{\GCH}{\axiom{GCH}}
\newcommand{\HS}{\axiom{HS}}
\DeclareMathOperator{\cf}{cf}
\DeclareMathOperator{\sym}{sym}
\DeclareMathOperator{\id}{id}
\DeclareMathOperator{\aut}{Aut}
\DeclareMathOperator{\Ult}{Ult}
\newcommand{\forces}{\mathrel{\Vdash}}
\newcommand{\PP}{\mathbb{P}}
\newcommand{\power}{\mathcal{P}}
\newcommand{\QQ}{\mathbb{Q}}
\newcommand{\RR}{\mathbb{R}}
\newcommand{\cU}{\mathcal U}
\newcommand{\sF}{\mathscr F}
\newcommand{\sG}{\mathscr G}
\newcommand{\sH}{\mathscr H}
\newcommand{\cS}{\mathcal S}
\def\ssubset{\mathrel{\vtop{\ialign{##\crcr$\hfil\subset\hfil$\crcr\noalign{\kern1.5pt\nointerlineskip}$\hfil\widetilde{}\hfil$\crcr\noalign{\kern1.5pt}}}}\vspace*{-0.3\baselineskip}}
\def\undertilde#1{\mathord{\vtop{\ialign{##\crcr
$\hfil\displaystyle{#1}\hfil$\crcr\noalign{\kern1.5pt\nointerlineskip}
$\hfil\tilde{}\hfil$\crcr\noalign{\kern1.5pt}}}}}
\newcommand{\tup}[1]{\langle#1\rangle}
\author{Yair Hayut}
\author{Asaf Karagila}
\thanks{The first author was partially supported by Lise Meitner FWF grant~2650-N35 and the ISF grant no.~1967/21. The second author was supported by the Royal Society Newton International Fellowship, grant no.~NF170989 and by UKRI Future Leaders Fellowship [MR/T021705/2].}
\thanks{No data are associated with this article.}
\address[Yair Hayut]{Einstein Institute of Mathematics, The Hebrew University of Jerusalem, Givat Ram, Jerusalem, 9190401, Israel}
\email{yair.hayut@mail.huji.ac.il}
\address[Asaf Karagila]{School of Mathematics,
University of Leeds,
Leeds, LS2~9JT, UK
}
\email{karagila@math.huji.ac.il}
\date{2 February, 2026}
\subjclass[2020]{Primary 03E25; Secondary 03E55, 03E35}
\keywords{Axiom of Choice, measurable cardinals, symmetric extensions,  elementary embeddings, Silver criterion}
\title{Small measurable cardinals}
\begin{document}
\begin{abstract}We continue the work from \cite{HayutKaragila:Critical} and make a small---but significant---improvement to the definition of $j$-decomposable system. This provides us with a better lifting of elementary embeddings to symmetric extensions. In particular, this allows us to more easily lift weakly compact embeddings and thus preserve the notion of weakly critical cardinals. We use this improved lifting criterion to show that the first measurable cardinal can be the first weakly critical cardinal or the first Mahlo cardinal, both relative to the existence of a single measurable cardinal. However, if the first inaccessible cardinal is the first measurable cardinal, then in a suitable inner model it has Mitchell order of at least $2$.
\end{abstract}
\maketitle
\section{Introduction}
In this short note we improve the basic lifting theorem for elementary embeddings to symmetric extensions, originally stated in \cite{HayutKaragila:Critical}. This improvement allows us to lift weakly compact embeddings and therefore preserve weakly critical cardinals (which are a strong choiceless version of weakly compact cardinals). We apply this new method to answer a question of Itay Kaplan (Question~3.4 in \cite{HayutKaragila:Critical}) and show that the least measurable can be the least weakly critical cardinal. We also show that the same technique can---essentially---provide us with a model in which the first Mahlo cardinal is the first measurable cardinal, although no embeddings are involved in this case.

This flavour of ``identity crisis'' extends the well known result of Jech that $\omega_1$ can be measurable, \cite{Jech:1968} (and this is equiconsistent with a single measurable cardinal).\footnote{See, for example, Lemma~19.1 in \cite{Jech2003}.} However, this approach fails when attempting to construct a model where the least measurable cardinal is the least inaccessible cardinal. Indeed, we show that if $\kappa$ is the least inaccessible cardinal and the least measurable cardinal, then in an inner model we have that $o(\kappa)\geq 2$. This shows that that the consistency strength required is significantly higher for this result to be possible.

\subsection*{Acknowledgements}
The authors would like to thank Ralf Schindler for making helpful suggestions which improved \autoref{thm:failure-of-covering} to its current form. We also thank the anonymous referee for their very helpful suggestions.

\section{Technical preliminaries}\label{section:prelims}
Let us begin by clarifying some of the large cardinal notions often studied by set theorists in $\ZFC$. We will (re)introduce critical and weakly critical cardinals later.

We say that $\kappa$ is an inaccessible cardinal if $V_\kappa$ is a model of $\ZF_2$, the second-order version of $\ZF$. This is equivalent to the definition ``For all $x\in V_\kappa$, any $f\colon x\to\kappa$ has a bounded image''.

Much as is the case in $\ZFC$, we say that $\kappa$ is a Mahlo cardinal if it is an inaccessible such that $\{\alpha<\kappa\mid\alpha\text{ is a regular cardinal}\}$ is a stationary set.

We say that a cardinal $\kappa>\omega$ is a measurable cardinal if it carries a $\kappa$-complete non-principal ultrafilter.\footnote{There is a case to be argued in favour of allowing $\omega$ to be measurable, especially in the choiceless context. In this work, however, it is simpler to stick with the traditional $\ZFC$-style definition which excludes $\omega$.} We can prove that if $\kappa$ is measurable, then it is a regular cardinal and that if $\lambda<\kappa$, then $\kappa\nleq2^\lambda$, namely there is no injection from $\kappa$ to $\power(\lambda)$. However we cannot prove that there is no surjection from $2^\lambda$ onto $\kappa$, e.g.\ $\omega_1$ can be measurable (see \cite{Jech:1968}, for example).

For infinite cardinals $\kappa,\lambda$ we denote by $S^\kappa_\lambda=\{\alpha<\kappa\mid\cf(\alpha)=\lambda\}$. We say that a stationary subset $S\subseteq S^\kappa_\omega$ \textit{reflects} if there is some $\alpha<\kappa$ such that $\cf(\alpha)>\omega$ and $S\cap\alpha$ is stationary in $\alpha$. If no such $\alpha$ exists, then $S$ is a non-reflecting stationary set. It is a standard exercise to see that if $\kappa$ is weakly compact, then every stationary subset of $S^\kappa_\omega$ reflects.

If $\kappa$ is a measurable cardinal and $U,U'$ are two normal measures on $\kappa$, we write $U\lhd U'$ if $U$ belongs to the ultrapower of $V$ by $U'$.\footnote{This definition makes sense even without assuming the Axiom of Choice.} This is a well-founded relation, and we write $o(\kappa)$, the Mitchell order of $\kappa$, to denote the range of its rank function.\footnote{See \cite[Definition~19.33]{Jech2003} for details.} Note that $o(\kappa)=0$ if $\kappa$ is not measurable, $o(\kappa)=1$ if and only if $\kappa$ is measurable, but is not measurable in any ultrapower by its measures, and if $o(\kappa)\geq 2$, then there is a normal measure which contains the set $\{\alpha<\kappa\mid\alpha\text{ is measurable}\}$. In particular, higher Mitchell order implies that there are many measurable cardinals in the universe.

Throughout the course of the paper we will be interested in elementary embeddings between transitive sets in a choiceless context. Let us stress that the definition of an elementary embedding between \emph{set structures} $j \colon \mathcal{A} \to \mathcal{B}$ makes perfect sense without the Axiom of Choice. Indeed, the deep connection between elementary embeddings and ultrafilters (via {\L}o\'{s}'s theorem), which holds in $\ZFC$, can be avoided when discussing elementary embeddings between set structures in a choiceless context.
\subsection{Symmetric extensions}
Let $\PP$ be a forcing notion. If $\pi\in\aut(\PP)$ then $\pi$ extends its action to $\PP$-names with the following recursive definition, \[\pi\dot x=\{\tup{\pi p,\pi\dot y}\mid\tup{p,\dot y}\in\dot x\}.\]
Let $\sG$ be a group. We say that $\sF$ is a filter of subgroups of $\sG$ if it is a non-empty family of subsgroups of $\sG$ closed under finite intersections and supergroups. We say that $\sF$ is a \textit{normal filter}\footnote{To avoid confusion with normal ultrafilters in the context of large cardinals, we will refer to those as normal \textit{measures}.} of subgroups if additionally for every $\pi\in\sG$ and $H\in\sF$, $\pi H\pi^{-1}\in\sF$.

\begin{definition}
  A \textit{symmetric system} is of the form $\tup{\PP,\sG,\sF}$ where $\PP$ is a forcing notion, $\sG\subseteq\aut(\PP)$ is a group of automorphisms, and $\sF$ is a normal filter of subgroups on $\sG$.
\end{definition}

Let $\tup{\PP,\sG,\sF}$ be a symmetric system and let $\dot x$ be a $\PP$-name. We say that $\dot x$ is \textit{$\sF$-symmetric} if $\sym_\sG(\dot x)=\{\pi\in\sG\mid \pi\dot x=\dot x\}\in\sF$. If this property holds hereditarily to all names which appear in $\dot x$ as well, we will say that $\dot x$ is \textit{hereditarily $\sF$-symmetric}. We use $\HS_\sF$ to denote the class of all hereditarily $\sF$-symmetric names, although we will omit the subscripts where no confusion can occur.

The following lemma and theorem can be found in \cite{Jech2003} as Lemma~14.37 and Lemma~15.51 respectively.

\begin{lemma}[The Symmetry Lemma]
Suppose that $p\in\PP$, $\dot x$ is a $\PP$-name, and $\pi\in\aut(\PP)$. Then $p\forces\varphi(\dot x)\iff\pi p\forces\varphi(\pi\dot x).$
\end{lemma}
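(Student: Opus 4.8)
The plan is to reduce the statement to a single identity relating the action of $\pi$ on names to its action on generic filters, and then to transfer through the forcing theorem. The central observation is that, since $\pi$ is an automorphism of $\PP$, it maps dense open sets to dense open sets; hence whenever $G$ is $V$-generic so is $\pi[G]=\{\pi p\mid p\in G\}$, and moreover $V[G]=V[\pi[G]]$ because $\pi\in V$ makes each of $G,\pi[G]$ definable from the other.

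First I would prove, by induction on the rank of the name $\dot x$, the evaluation identity
\[(\pi\dot x)_{\pi[G]}=\dot x_G.\]
Unwinding the recursive definition of $\pi\dot x$, an element $(\pi\dot y)_{\pi[G]}$ enters the left-hand side exactly when $\tup{\pi p,\pi\dot y}\in\pi\dot x$ for some $\pi p\in\pi[G]$; by the definition of the name-action and of $\pi[G]$ this happens precisely when $\tup{p,\dot y}\in\dot x$ for some $p\in G$, and the inductive hypothesis gives $(\pi\dot y)_{\pi[G]}=\dot y_G$. Comparing with the definition of $\dot x_G$ closes the induction.

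With the identity in hand, the lemma follows quickly. Suppose $p\forces\varphi(\dot x)$ and let $H$ be any generic filter containing $\pi p$. Then $G=\pi^{-1}[H]$ is generic, contains $p$, and satisfies $V[G]=V[H]$; so $V[G]\models\varphi(\dot x_G)$, and since $\dot x_G=(\pi\dot x)_{\pi[G]}=(\pi\dot x)_H$ we obtain $V[H]\models\varphi((\pi\dot x)_H)$. As $H$ was arbitrary, $\pi p\forces\varphi(\pi\dot x)$. The reverse implication is the same argument applied to $\pi^{-1}$, using that $\pi^{-1}$ is again an automorphism.

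The only genuinely delicate point is the base of the name-induction together with the bookkeeping that $\pi[G]$ is generic and yields the same model; everything after that is a mechanical transfer through the forcing theorem. One could instead give a purely syntactic proof by induction on the complexity of $\varphi$, verifying the atomic cases $\dot x\in\dot y$ and $\dot x=\dot y$ directly from the definition of $\forces$ and then handling $\neg,\wedge,\exists$ using that $\pi$ preserves the ordering and density of subsets of $\PP$; I expect the semantic route above to be shorter because it isolates the combinatorial content into the single evaluation identity.
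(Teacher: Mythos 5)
Your proof is correct, but it cannot be compared line-by-line with the paper's argument for a simple reason: the paper gives none. The lemma is quoted with a \verb|\qed| and attributed to Jech's \emph{Set Theory} (Lemma~14.37), where the proof runs along the other route you mention at the end --- one works with the Boolean completion and shows, by induction on the complexity of $\varphi$ (with the atomic cases $\dot x\in\dot y$, $\dot x=\dot y$ handled by induction on name rank), that $\|\varphi(\pi\dot x_1,\dots,\pi\dot x_n)\|=\pi\|\varphi(\dot x_1,\dots,\dot x_n)\|$, from which the lemma is immediate. Your semantic route --- proving the evaluation identity $(\pi\dot x)_{\pi[G]}=\dot x_G$ by induction on rank, checking that $\pi[G]$ is generic with $V[G]=V[\pi[G]]$, and then transferring both directions of the forcing theorem --- is a standard and genuinely different decomposition: it isolates all the combinatorics in one identity and treats the logical connectives for free, at the cost of invoking the forcing theorem as a black box and of the usual metamathematical caveat about quantifying over $V$-generic filters (harmless here, since the paper itself freely speaks of $V$-generic filters, and the argument can be formalized over countable elementary submodels or Boolean-valuedly). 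Two small points you should make explicit: the reverse implication needs that the action of automorphisms on names is a group action, i.e.\ $\pi^{-1}(\pi\dot x)=\dot x$, which is another easy rank induction; and in the forward direction you use $\pi[\pi^{-1}[H]]=H$, which is just bijectivity of $\pi$ but deserves a word. Neither is a gap, merely bookkeeping.
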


\begin{theorem}
  Suppose that $\tup{\PP,\sG,\sF}$ is a symmetric system and $G\subseteq\PP$ is a $V$-generic filter. Then $\HS^G=\{\dot x^G\mid\dot x\in\HS\}$ is a transitive subclass of $V[G]$ which contains $V$ and satisfies all the axioms of $\ZF$.
\end{theorem}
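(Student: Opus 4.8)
The plan is to check, in this order, that $\HS^G$ is transitive, that $V\subseteq\HS^G$, and then that every axiom of $\ZF$ holds; the first two points are immediate and the axioms are the real content. For transitivity I would use that if $\dot x\in\HS$ and $\tup{p,\dot y}\in\dot x$ for some $p$, then $\dot y$ is again hereditarily $\sF$-symmetric, by the very definition of $\HS$; hence every element of $\dot x^G$ is of the form $\dot y^G$ with $\dot y\in\HS$ and so lies in $\HS^G$. For $V\subseteq\HS^G$ I would show by $\in$-induction that each canonical name $\check a$ lies in $\HS$: since $\pi\check a=\check a$ for every $\pi\in\aut(\PP)$ we have $\sym_\sG(\check a)=\sG\in\sF$ (recall $\sF$ is a nonempty filter of subgroups, so $\sG\in\sF$ by closure under supergroups), and the names appearing in $\check a$ are themselves canonical; evaluating gives $\check a^G=a$, and in particular $\Ord\subseteq\HS^G$, which will be needed for Replacement.

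The workhorse for the remaining axioms is a \emph{symmetric forcing relation} $\forces^{\HS}$, capturing satisfaction in $\HS^G$ rather than in $V[G]$. I would define it by the usual recursion, restricted to hereditarily symmetric names, and establish two facts: that it is definable in $V$, and that it obeys the Symmetry Lemma along $\sG$, i.e.\ $p\forces^{\HS}\varphi(\dot x)\iff\pi p\forces^{\HS}\varphi(\pi\dot x)$ for every $\pi\in\sG$. With this in hand the ``local'' axioms are routine: Extensionality and Foundation are inherited from $V[G]$ by transitivity, Infinity holds because $\omega\in V\subseteq\HS^G$, and for Pairing and Union one writes down explicit names, e.g.\ $\set{\tup{1_\PP,\dot x},\tup{1_\PP,\dot y}}$ whose symmetry group contains $\sym_\sG(\dot x)\cap\sym_\sG(\dot y)\in\sF$ by closure under finite intersections.

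For Power Set I would first reduce, by the usual nice-name argument (which preserves hereditary symmetry), to the fact that every subset of $\dot x^G$ lying in $\HS^G$ has a hereditarily symmetric name whose domain is contained in $\dom(\dot x)$. These names form a \emph{set} $D$, being subsets of $\dom(\dot x)\times\PP$, and the crucial point is that every $\pi\in\sym_\sG(\dot x)$ permutes $\dom(\dot x)$ and---by normality of $\sF$---preserves hereditary symmetry, hence permutes $D$. Consequently the name $\set{\tup{1_\PP,\dot z}\midd\dot z\in D}$ has symmetry group containing $\sym_\sG(\dot x)\in\sF$, evaluates to a superset of the power set of $\dot x^G$ inside $\HS^G$, and is trimmed to size by Separation.

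The heart of the proof, and the step I expect to demand the most care, is Separation, after which Replacement follows easily. Given $\dot x\in\HS$ and a formula $\varphi$ with symmetric parameters $\dot x_1,\dots,\dot x_n$, I would set
\[\dot y=\set{\tup{p,\dot z}\midd\dot z\in\dom(\dot x),\ p\forces^{\HS}(\dot z\in\dot x\wedge\varphi(\dot z,\dot x_1,\dots,\dot x_n))}.\]
Any $\pi\in\sym_\sG(\dot x)\cap\bigcap_i\sym_\sG(\dot x_i)$ sends $\tup{p,\dot z}$ to $\tup{\pi p,\pi\dot z}$ and, by the Symmetry Lemma for $\forces^{\HS}$ together with $\pi\dot x=\dot x$ and $\pi\dot x_i=\dot x_i$, preserves membership in $\dot y$ while permuting $\dom(\dot x)$; hence this intersection, an element of $\sF$, is contained in $\sym_\sG(\dot y)$, so $\dot y\in\HS$ and $\dot y^G=\set{u\in\dot x^G\midd\HS^G\models\varphi(u,\dot x_1^G,\dots,\dot x_n^G)}$. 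This is exactly the place where the Symmetry Lemma and normality of $\sF$ must be combined correctly. Finally, for Replacement I would use Replacement in $V$ to bound the ranks of hereditarily symmetric names witnessing the values of a given $\HS^G$-definable function on $\dot x^G$; the collection of all hereditarily symmetric names below that rank is $\sG$-invariant (automorphisms preserve both $\rank$ and, by normality, $\HS$), so gathering them as in the Power Set step yields a symmetric name for a superset of the image, which Separation then cuts down.
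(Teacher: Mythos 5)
Your proposal is correct, but note that the paper does not prove this theorem at all: it is quoted with a citation to Theorem~15.51 of Jech's \emph{Set Theory}, and your argument is essentially a faithful reconstruction of that standard textbook proof (relativised forcing relation $\forces^{\HS}$, the Symmetry Lemma plus normality of $\sF$ for Separation, and the set of hereditarily symmetric names with domain inside $\dom(\dot x)$ for Power Set). All the key points are handled properly --- in particular the two places where normality of the filter is genuinely needed (closure of $\HS$ under the $\sG$-action, via $\sym_\sG(\pi\dot z)=\pi\sym_\sG(\dot z)\pi^{-1}$) --- so there is nothing to flag beyond the routine verification of the truth lemma for $\forces^{\HS}$, which your sketch implicitly assumes when it says the relation ``captures satisfaction in $\HS^G$.''
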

We refer to $\HS^G$ as a \textit{symmetric extension} of $V$. We can relativise the $\forces$ relation to the class $\HS$ and obtain $\forces^\HS$, which allows us to discuss truth values of statements in $\HS^G$ in the ground model. The symmetric forcing relation satisfies the Forcing Theorem as well as the Symmetry Lemma, if we restrict $\pi$ to $\sG$.

While we will not expand on the subject of iterating symmetric extensions in this paper, we will refer to two concepts that arise from the work of the second author in \cite{Karagila2016}.

\begin{definition}
Let $\tup{\PP,\sG,\sF}$ be a symmetric system. We say that $D\subseteq\PP$ is \textit{symmetrically dense} if for some $H\in\sF$, every $\pi\in H$ satisfies $\pi``D=D$. We say that $G$ is  \textit{symmetrically $V$-generic} if for every symmetrically dense $D\in V$, $D\cap G\neq\varnothing$.
\end{definition}

\begin{theorem}[Theorem~8.4 in \cite{Karagila2016}]
  Let $\tup{\PP,\sG,\sF}$ be a symmetric system and $\dot x\in\HS$. The following are equivalent:
  \begin{enumerate}
  \item $p\forces^\HS\varphi(\dot x)$.
  \item For every symmetrically generic filter $G$ with $p\in G$, $\HS^G\models\varphi(\dot x^G)$.
  \item For every generic filter $G$ with $p\in G$, $\HS^G\models\varphi(\dot x^G)$.
  \end{enumerate}
\end{theorem}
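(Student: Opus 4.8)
The plan is to establish the cycle of implications $(1)\Rightarrow(2)\Rightarrow(3)\Rightarrow(1)$. The implication $(2)\Rightarrow(3)$ is immediate once we observe that every $V$-generic filter is in particular symmetrically $V$-generic: a symmetrically dense open set is in particular dense and open, so any fully generic filter meets all of them, and hence a statement holding for every symmetrically generic filter holds a fortiori for the smaller class of fully generic filters. The genuine work lies in $(1)\Rightarrow(2)$, the soundness of $\forces^\HS$ against the weaker notion of symmetric genericity, and in $(3)\Rightarrow(1)$, which is the completeness direction.

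For $(1)\Rightarrow(2)$ I would prove the corresponding \emph{symmetric truth lemma}: for any symmetrically $V$-generic $G$ and any $\dot x\in\HS$, we have $\HS^G\models\varphi(\dot x^G)$ if and only if some $p\in G$ satisfies $p\forces^\HS\varphi(\dot x)$; granting this, $(1)\Rightarrow(2)$ follows at once. The proof is the usual induction on the complexity of $\varphi$, parallel to the proof of the Forcing Theorem, with one extra ingredient at each step: whenever the standard argument appeals to a dense open set $D$ of conditions deciding a subformula, I must verify that $D$ is in fact \emph{symmetrically} dense, so that symmetric genericity already guarantees $G\cap D\neq\varnothing$. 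This is exactly where the hereditary symmetry of the names is used. Concretely, if $\dot x_1,\dots,\dot x_n\in\HS$ are the names occurring in a subformula $\psi$ and $H=\bigcap_i\sym_\sG(\dot x_i)$, then $H\in\sF$ since $\sF$ is closed under finite intersections, and for $\pi\in H$ the Symmetry Lemma (applied to $\forces^\HS$, with $\pi$ restricted to $\sG$) gives that $p\forces^\HS\psi(\dot x_1,\dots,\dot x_n)$ iff $\pi p\forces^\HS\psi(\pi\dot x_1,\dots,\pi\dot x_n)=\psi(\dot x_1,\dots,\dot x_n)$; hence the decision set $D=\{q\mid q\text{ decides }\psi\}$ satisfies $\pi``D=D$ for every $\pi\in H$, making it symmetrically dense. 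The atomic and propositional cases are routine, and the existential case additionally requires the Maximality Principle for $\forces^\HS$, namely that $p\forces^\HS\exists y\,\psi(y,\dot x)$ yields a single witnessing name $\dot y\in\HS$ with $p\forces^\HS\psi(\dot y,\dot x)$, which is part of the established theory of symmetric forcing.

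For $(3)\Rightarrow(1)$ I would argue by contraposition. Suppose $p\nforces^\HS\varphi(\dot x)$. By the basic properties of the forcing relation, which $\forces^\HS$ enjoys since it satisfies the Forcing Theorem, there is some $q\leq p$ with $q\forces^\HS\neg\varphi(\dot x)$. Taking any $V$-generic filter $G$ with $q\in G$, the Forcing Theorem for full genericity gives $\HS^G\models\neg\varphi(\dot x^G)$, so $\HS^G\not\models\varphi(\dot x^G)$; since $q\leq p$ we also have $p\in G$, contradicting $(3)$. This closes the cycle.

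The main obstacle I anticipate is the inductive quantifier step of the symmetric truth lemma, where one must simultaneously arrange that a witness for an existential can be taken as a hereditarily symmetric name and that the associated decision set is invariant under a subgroup lying in $\sF$. Everything hinges on the finite-intersection closure and normality of $\sF$ combined with the Symmetry Lemma; once the symmetry of the relevant dense sets is verified, the reduction from full genericity to symmetric genericity is mechanical.
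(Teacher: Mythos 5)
Your overall architecture is sound and is essentially the one behind the cited Theorem~8.4: $(2)\Rightarrow(3)$ is immediate since every $V$-generic filter is symmetrically $V$-generic (reading ``symmetrically dense'' as dense \emph{and} invariant under a group in $\sF$, which is the intended definition), $(1)\Leftrightarrow(3)$ is the Forcing Theorem for $\forces^\HS$, and the real content is a truth lemma for symmetrically generic filters, proved by induction on $\varphi$ by checking that the relevant dense sets are invariant under a group in $\sF$ via the Symmetry Lemma and closure of $\sF$ under finite intersections. However, there is one genuine gap, and it sits exactly at the step you yourself flagged as the main obstacle: your existential case invokes a ``Maximality Principle for $\forces^\HS$'', asserting that $p\forces^\HS\exists y\,\psi(y,\dot x)$ yields a single $\dot y\in\HS$ with $p\forces^\HS\psi(\dot y,\dot x)$. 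This is \emph{not} part of the established theory; it is false in general, and its failure is not incidental---the impossibility of mixing names symmetrically is precisely the mechanism by which symmetric extensions violate $\AC$. Concretely, in the first Cohen model (conditions are finite partial functions $\omega\times\omega\to 2$, $\dot a_n$ the name of the $n$th real, $\dot A$ the name of their set, $\sF$ generated by the stabilizers $\fix(E)$ for finite $E\subseteq\omega$) one has $1\forces^\HS\exists y\,(y\in\dot A\wedge\check 0\in y)$, since densely some $a_n$ contains $0$; but if $\dot y\in\HS$ with $\fix(E)\subseteq\sym_\sG(\dot y)$ were a single witness, then any condition deciding $\dot y=\dot a_n$ must have $n\in E$ (swap $n$ with a fresh coordinate $n'$ by some $\pi\in\fix(E)$; the swapped condition is compatible with the original and forces $\dot y=\dot a_{n'}$, contradicting $\dot a_n\neq\dot a_{n'}$), so $1\forces\dot y\in\{\dot a_n\mid n\in E\}$, and the condition asserting $0\notin a_n$ for all $n\in E$ refutes $\check 0\in\dot y$.

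The gap is local and fixable without fullness. In the existential step, work instead with the set $D=\set{q\mid \exists\dot y\in\HS\ q\forces^\HS\psi(\dot y,\dot x)}\cup\set{q\mid q\forces^\HS\neg\exists y\,\psi(y,\dot x)}$. This $D$ is dense open by the Forcing Theorem for $\forces^\HS$, and it is invariant under every $\pi\in\sym_\sG(\dot x)$: normality of $\sF$ guarantees that $\pi\dot y\in\HS$ whenever $\dot y\in\HS$ and $\pi\in\sG$, and the Symmetry Lemma transports $q\forces^\HS\psi(\dot y,\dot x)$ to $\pi q\forces^\HS\psi(\pi\dot y,\dot x)$, and likewise for the second alternative. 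Hence $D$ is symmetrically dense, so a symmetrically generic $G\ni p$ meets it at some $q$; since $G$ is a filter, $p$ and $q$ have a common extension in $G$, which rules out $q$ forcing $\neg\exists y\,\psi(y,\dot x)$ (no condition forces a statement and its negation). Thus $q\forces^\HS\psi(\dot y,\dot x)$ for some concrete $\dot y\in\HS$, and the induction hypothesis applied to $\psi(\dot y,\dot x)$ at $q\in G$ gives $\HS^G\models\psi(\dot y^G,\dot x^G)$, hence $\HS^G\models\exists y\,\psi(y,\dot x^G)$. With this replacement (and the same invariance checks at the atomic cases, which your intersection-of-stabilizers argument already covers), your proof goes through.
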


\begin{definition}
  Let $\PP\ast\dot\QQ$ be a two-step iterated forcing, let $\sG$ be a group of automorphisms of $\PP$ such that $\forces_\PP\pi\dot\QQ=\dot\QQ$ for all $\pi\in\sG$, and let $\dot\sH$ be a name for an automorphism group of $\dot\QQ$. The \textit{generic semi-direct product}, $\sG\ast\dot\sH$, is the group of automorphisms of $\PP\ast\dot\QQ$ which can be represented as $\tup{\pi,\dot\sigma}$ with $\pi\in\sG$ and $\forces_\PP\dot\sigma\in\dot\sH$, and whose action on $\PP\ast\dot\QQ$ is given by \[\tup{\pi,\dot\sigma}(p,\dot q)=\tup{\pi p,\pi(\dot\sigma\dot q)}.\]
\end{definition}
This is a natural notion which is the first step towards understanding how to iterate symmetric extensions as a whole, and we will normally have a symmetric system and require that $\dot\QQ$ and $\dot\sH$ are in $\HS$ altogether. We remark that in the case of a product of two symmetric systems, since $\sH$ is in the ground model, it follows that $\sG\ast\check\sH=\sG\times\sH$ acting pointwise on $\PP\times\QQ$.

\subsection{Large cardinal axioms and preservation thereof}
Suppose that $G$ is a $V$-generic filter for some forcing and $\kappa$ is a cardinal in $V$. We say that $A \in \power^{V[G]}(\kappa)$ is \textit{essentially small with respect to $\kappa$} if $V[A]$ is a generic extension by a forcing of size $<\kappa$. When $\kappa$ is clear from the context we will omit it. The following theorem is essentially due to Jech \cite{Jech:1968}.
\begin{theorem}\label{thm:jech-measurable}
  Let $V\subseteq W\subseteq V[G]$ be models of $\ZF$ with $G$ a $V$-generic filter, and let be $\kappa$ a measurable cardinal in $V$. If every $A\in\power(\kappa)^W$ is essentially small, then every measure on $\kappa$ in $V$ extends to a measure in $W$.
\end{theorem}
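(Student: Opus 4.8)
The plan is to fix a measure $U$ on $\kappa$ in $V$ and let $j\colon V\to M=\Ult(V,U)$ be the associated ultrapower embedding, so that $\crit(j)=\kappa$ and $M$ is closed under $\kappa$-sequences from $V$. Rather than attempt to lift $j$ through a single forcing---which is not possible here, since distinct subsets of $\kappa$ in $W$ may be witnessed essentially small by \emph{different} forcing notions---I would define the extension intrinsically as the filter generated by $U$, namely
\[\wt U=\sett{X\in\power(\kappa)^W\middd \exists Y\in U\ (Y\subseteq X)}.\]
This is manifestly a $\kappa$-complete filter on $\kappa$ in $W$ extending $U$, and the entire content of the theorem is that it is an \emph{ultra}filter. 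Its independence of any choice of witnessing forcing is exactly what lets us sidestep coherence problems.

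To see that $\wt U$ is an ultrafilter, fix $A\in\power(\kappa)^W$. By hypothesis $A$ is essentially small, so $V[A]=V[g]$ where $g$ is $V$-generic for some $\QQ\in V$ with $|\QQ|<\kappa$. Since $|\QQ|<\crit(j)$ we have $\QQ\in V_\kappa$, and hence $j(\QQ)=\QQ$ and $j\restriction\QQ=\id$; moreover every dense subset of $\QQ$ lying in $M$ already lies in $V$ (as $M\subseteq V$), so $g$ is also $M$-generic. The usual lifting lemma then yields an elementary $\hat j\colon V[g]\to M[g]$ with $\hat j(\dot x^g)=j(\dot x)^g$, still with critical point $\kappa$.

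The key step is to read off a ground-model $U$-set below $A$ from this lift. Since $A\in V[g]$ has $\hat j(A)=j(\dot A)^g$ for a name $\dot A\in V$, either $\kappa\in\hat j(A)$ or $\kappa\in\hat j(\kappa\setminus A)$; assume the former and pick $q\in g$ with $q\forces^M_\QQ\kappa\in j(\dot A)$. Put $B_q=\set{\alpha<\kappa\midd q\forces^V_\QQ\alpha\in\dot A}\in V$. Applying $j$ and using $j(q)=q$ and $j(\QQ)=\QQ$, one computes $j(B_q)=\set{\beta<j(\kappa)\midd q\forces^M_\QQ\beta\in j(\dot A)}$, whence $\kappa\in j(B_q)$ and so $B_q\in U$; and since $q\in g$, every $\alpha\in B_q$ lies in $\dot A^g=A$, giving $B_q\subseteq A$. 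Thus $A\in\wt U$, and symmetrically the case $\kappa\in\hat j(\kappa\setminus A)$ yields $\kappa\setminus A\in\wt U$. In particular this argument also establishes the equivalence $A\in\wt U\iff\kappa\in\hat j(A)$ for \emph{every} witnessing forcing.

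For $\kappa$-completeness in $W$, given $\tup{A_i\midd i<\gamma}\in W$ with $\gamma<\kappa$ and each $A_i\in\wt U$, I would code the sequence as a single subset of $\kappa$, which is again essentially small, lift $j$ once through the corresponding forcing, and use $\gamma<\crit(j)$ to obtain $\hat j\bigl(\bigcap_i A_i\bigr)=\bigcap_i\hat j(A_i)\ni\kappa$, so that $\bigcap_i A_i\in\wt U$ by the displayed equivalence. The main obstacle is conceptual rather than technical: because essential smallness is a property of each subset individually and not of $W$ as a whole, no uniform lift of $j$ is available, and one must instead verify each instance of the ultrafilter and completeness axioms by a \emph{local} lift through a forcing chosen to witness the relevant set. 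The forcing-independent definition of $\wt U$ is what guarantees that these local verifications cohere into a single measure.
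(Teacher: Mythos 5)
Your overall strategy---defining $\wt U$ intrinsically as the filter generated by $U$ in $W$ and verifying ultrafilter-ness through a \emph{local} lift of the ultrapower embedding for each witnessing forcing---is the standard modern route to this theorem (the paper itself gives no proof, citing Jech), and most of the individual steps (the $M$-genericity of $g$, the Silver-style lift, the computation of $j(B_q)$) are correct. However, there is a genuine gap at the crucial inference ``$\kappa\in j(B_q)$ and so $B_q\in U$.'' For an arbitrary $\kappa$-complete ultrafilter $U$, the family $\set{X\in\power(\kappa)^V\midd \kappa\in j_U(X)}$ is the \emph{normal} measure derived from $j_U$, and it equals $U$ if and only if $U$ is normal; the theorem, however, quantifies over \emph{every} measure on $\kappa$. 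Concretely, let $D$ be normal and let $U$ be its pushforward under $\alpha\mapsto\alpha+1$; then $j_U=j_D$, the set $S$ of successor ordinals is in $U$, and taking $A=\kappa\setminus S$ (a set in $V$, hence trivially essentially small) your argument runs the first branch of the dichotomy (since $\kappa$ is a limit ordinal, $\kappa\in j(A)$) and concludes $A\in\wt U$---which is false, as no $U$-large set consists of limit ordinals. So as written the proof establishes only that the filter generated by the \emph{derived normal} measure is an ultrafilter in $W$, i.e.\ that \emph{some} measure of $V$ extends, not that the given $U$ does; this matters here, since the statement (and the paper's later discussion of Mitchell order, where non-normal measures are explicitly in play) concerns arbitrary measures.

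The repair is short, and worth recording because it leaves the rest of your argument intact: the correct characterization for arbitrary $U$ is $X\in U\iff[\id]_U\in j_U(X)$, where $[\id]_U$ is the ordinal represented by the identity function (one has $\kappa\leq[\id]_U<j(\kappa)$, with equality to $\kappa$ exactly when $U$ is normal). Run your dichotomy with the seed $\delta=[\id]_U$ in place of $\kappa$: since $\hat j(A)\cup\hat j(\kappa\setminus A)=j(\kappa)\ni\delta$, either $\delta\in\hat j(A)$ or $\delta\in\hat j(\kappa\setminus A)$; in the first case pick $q\in g$ with $q\forces^M_\QQ\check\delta\in j(\dot A)$, and then $\delta\in j(B_q)$ does yield $B_q\in U$ with $B_q\subseteq A$. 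Your $\kappa$-completeness argument (lifting once through a forcing witnessing a code for the whole sequence) then goes through verbatim with $\delta$ replacing $\kappa$, as does the forcing-independence of the equivalence $A\in\wt U\iff\delta\in\hat j(A)$. Two smaller points: $|\QQ|<\kappa$ does not literally give $\QQ\in V_\kappa$, so first replace $\QQ$ by an isomorphic copy whose underlying set is an ordinal below $\kappa$; and your proof (like Jech's original partition argument) uses choice in $V$ to form a well-founded ultrapower satisfying {\L}o\'s's theorem, so the hypothesis should be read as $V\models\ZFC$, which is how the paper applies it, despite the statement saying ``models of $\ZF$.''
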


\begin{definition}\label{def:weakly-critical}
We say that $\kappa$ is a \textit{weakly critical cardinal} if for every $A\subseteq V_\kappa$ there exists an elementary embedding $j\colon X\to M$ with critical point $\kappa$, where $X$ and $M$ are transitive and $\kappa,V_\kappa,A\in X\cap M$.
\end{definition}
\begin{proposition}[Proposition~3.2 in \cite{HayutKaragila:Critical}]
$\kappa$ is weakly critical if and only if for every $A\subseteq V_\kappa$ there is a transitive, elementary end-extension of $\tup{V_\kappa,\in,A}$.
\end{proposition}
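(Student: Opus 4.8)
The plan is to prove both implications, reading ``end-extension'' in the strong (proper) sense $N\supsetneq V_\kappa$; this is the only sensible reading, since the identity is always an elementary end-extension, and properness is equivalent to $\kappa\in N$ because a new element of the transitive $N$ has rank at least $\kappa$. Throughout I take the transitive domains and targets to be models of $\ZF^-$, so that the satisfaction relation of a set-sized structure is available and absolute.

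For the forward direction, fix $A\subseteq V_\kappa$ and an elementary $j\colon X\to M$ with $\crit(j)=\kappa$ and $\kappa,V_\kappa,A\in X\cap M$. First I record the standard fact that $j\restriction V_\kappa=\id$, proved by $\in$-induction from $\crit(j)=\kappa$; consequently $V_\kappa\subseteq j(V_\kappa)$, $A=j(A)\cap V_\kappa$, and, since $j(\kappa)>\kappa$, also $\kappa\in j(V_\kappa)\setminus V_\kappa$. I then claim that, computed inside $M$, the structure $\tup{j(V_\kappa),\in,j(A)}$ is a proper transitive elementary end-extension of $\tup{V_\kappa,\in,A}$. Transitivity and properness are immediate from the previous line together with transitivity of $M$; the end-extension property holds because every element of a set $a\in V_\kappa$ already lies in $V_\kappa$. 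Elementarity is the substantive point: for a formula $\varphi$ and parameters $\bar a\in V_\kappa$, the assertion ``$\tup{V_\kappa,\in,A}\models\varphi(\bar a)$'' is a first-order statement about the elements $V_\kappa,A,\bar a,\ulcorner\varphi\urcorner$ of $X$, so applying the elementarity of $j$ (which fixes $\bar a$ and $\ulcorner\varphi\urcorner$ and sends $V_\kappa,A$ to $j(V_\kappa),j(A)$) and using absoluteness of satisfaction between the transitive $\ZF^-$-models $X,M$ and $V$ yields the desired equivalence with $\tup{j(V_\kappa),\in,j(A)}$.

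For the converse, fix $A\subseteq V_\kappa$. Working in $\ZF$ without choice, I first build a transitive $X\models\ZF^-$ with $V_\kappa,A,\kappa\in X$ by closing $\mathrm{trcl}\{V_\kappa,A,\kappa\}$ under the definable operations and under $\omega$-sequences; since each closure step adds only canonically determined sets, no choice is needed, and since $V_\kappa\times V_\kappa$ and ${}^\omega V_\kappa$ both inject into $V_\kappa$ the construction can be arranged so that $|X|=|V_\kappa|$, fixing a bijection $e\colon V_\kappa\to X$. Pulling $\in^X$ back along $e$ gives a relation $E$ on $V_\kappa$, and I let $A^\ast\subseteq V_\kappa$ code $E$ together with the points $e^{-1}(V_\kappa),e^{-1}(A),e^{-1}(\kappa)$ and an $E$-rank function, so that $\tup{V_\kappa,\in,A^\ast}$ interprets an isomorphic copy of $\tup{X,\in}$. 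Applying the end-extension hypothesis to $A^\ast$ produces a proper transitive elementary end-extension $\tup{N,\in,B}$; decoding $B$ yields a structure $\tup{\mathrm{fld}(E^\ast),E^\ast}$ end-extending the copy of $X$ which, by elementarity, still models $\ZF^-$. Once I know $E^\ast$ is well-founded I may take its transitive collapse $M$, and the composite of $X\cong\tup{\mathrm{fld}(E),E}\hookrightarrow\tup{\mathrm{fld}(E^\ast),E^\ast}\to M$ is an elementary $j\colon X\to M$ fixing $V_\kappa$ pointwise; properness of the end-extension forces new members into $j(V_\kappa)$, whence $j(\kappa)>\kappa$ and $\crit(j)=\kappa$, while $V_\kappa,A,\kappa\in X\cap M$ by construction.

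The main obstacle is exactly the well-foundedness of $E^\ast$, which is what legitimises the transitive collapse and hence the transitivity of $M$, and it is here that the absence of choice genuinely bites. Since $V_\kappa$ is closed under $\omega$-sequences, ``$E$ has no descending $\omega$-chain'' is a genuine first-order sentence of $\tup{V_\kappa,\in,A^\ast}$, quantifying over the elements $f\in V_\kappa$ that code such chains, and it is true; by elementarity the same sentence holds of $E^\ast$ in $\tup{N,\in,B}$. To convert this into honest well-foundedness I must guarantee that any real descending $E^\ast$-chain appears as an element of $N$, which is why the end-extension has to be taken (or arranged) to be closed under $\omega$-sequences; a secondary point to check is that properness of the end-extension is inherited ``below $j(\kappa)$'', so that the critical point is exactly $\kappa$ and not larger. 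I expect these two verifications---well-foundedness via captured $\omega$-chains, and the precise location of the critical point---to be where essentially all of the work lies.
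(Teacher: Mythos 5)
Your forward direction is essentially the right (and the intended) argument: inside $M$, the structure $\tup{j(V_\kappa),\in,j(A)}$ is a proper transitive elementary end-extension of $\tup{V_\kappa,\in,A}$. One caveat: the definition of weakly critical gives you arbitrary transitive $X$ and $M$; you are not entitled to ``take'' them to be models of $\ZF^-$, and you need this because you route elementarity through an internal satisfaction predicate. The fix is routine: argue formula-by-formula, using that the relativisation $\varphi^{V_\kappa,A}(\bar a)$ is absolute between $V$, $X$ and $M$ (transitive sets containing $V_\kappa,A,\bar a$), then apply elementarity of $j$ to each relativised formula. The backward direction, however, has a genuine gap, and it is exactly the one you flagged: well-foundedness of $E^\ast$. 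Your proposed repair---taking the end-extension ``to be (or arranged to be) closed under $\omega$-sequences''---is not available: the hypothesis supplies only \emph{some} transitive elementary end-extension of $\tup{V_\kappa,\in,A^\ast}$, and no first-order property of $\tup{V_\kappa,\in,A^\ast}$ can force $N$ to contain all genuine $\omega$-sequences of its elements, so a real descending $E^\ast$-chain need not appear in $N$. There are secondary problems too: the proposition lives in $\ZF$, and your construction of a transitive $\ZF^-$-model $X$ with a bijection $e\colon V_\kappa\to X$ is a L\"owenheim--Skolem-type argument that uses choice (also ${}^\omega V_\kappa$ injecting into $V_\kappa$ presupposes $\cf(\kappa)>\omega$, which you do not yet know); and since $e$ itself is not coded into $A^\ast$, the collapse of $e^{-1}(x)$ need not be $x$, so ``$j$ fixes $V_\kappa$ pointwise'' does not follow.

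The point you are missing is that the decode-and-collapse detour is what manufactures the well-foundedness problem, and the proof in \cite{HayutKaragila:Critical} avoids it entirely: the hypothesis already hands you a \emph{transitive} structure, so no collapse is ever needed. Given $A$, let $\tup{N,\in,B}$ be a proper transitive elementary end-extension of $\tup{V_\kappa,\in,A}$ and put $\lambda=\Ord\cap N$. Since rank is absolute for transitive sets, any $x\in N$ of rank $<\kappa$ lies in $V_\kappa$; properness then forces $\lambda>\kappa$, hence $\kappa\in N$ and $V_\kappa=\set{x\in N\mid\rank(x)<\kappa}\in N$; finally the separation instance ``$\forall x\,\exists y\,(y=x\cap A)$'', true in $\tup{V_\kappa,\in,A}$, transfers by elementarity and yields $A=B\cap V_\kappa\in N$. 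Now take $X:=V_\kappa\cup\set{\kappa,V_\kappa,A}$ and $M:=N\cup\set{\lambda,N,B}$, both transitive outright, and define $j\restriction V_\kappa=\id$, $j(\kappa)=\lambda$, $j(V_\kappa)=N$, $j(A)=B$. Elementarity of $j\colon\tup{X,\in}\to\tup{M,\in}$ is a formula-by-formula translation: quantification over $X$ (resp.\ $M$) becomes quantification over $V_\kappa$ (resp.\ $N$) together with the three distinguished points, and the atomic facts match because $\kappa=\Ord\cap V_\kappa$, $\lambda=\Ord\cap N$, and $B\cap V_\kappa=A$. Then $\crit(j)=\kappa$ and $\kappa,V_\kappa,A\in X\cap M$, as required. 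This is precisely why the definition of weakly critical asks only for transitive $X,M$ containing $\kappa,V_\kappa,A$ rather than for models of any set theory: $X$ is allowed to be this small, and the transitivity of the given end-extension does all the work your Mostowski collapse was supposed to do.
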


\autoref{def:weakly-critical}, as studied in-depth in \cite{HayutKaragila:Critical}, is the embedding-based definition of weakly compact cardinals in a choiceless context. This definition is quite strong and implies inaccessibility and Mahloness, as well as stationary reflection.\footnote{In the literature, the notion of weak compactness in a choiceless context typically refers to the partition property $\kappa\to(\kappa)^2_2$ (see, e.g., \cite{Apter:1983,Bull1978}), which is quite weak without the Axiom of Choice. Properties which are equivalent under the Axiom of Choice to weak compactness (such as the tree property with inaccessibility) have several non-equivalent formulations in a choiceless context.}

\begin{corollary}\label{cor:weakly-critical-implies-stationary-reflection}
If $\kappa$ is weakly critical, then every stationary $S\subseteq\kappa$ reflects.
\end{corollary}
\begin{proof}
  Let $\tup{V_\kappa,\in,S}\prec M$ be an elementary end-extension of $V_\kappa$. Then $M\models``S\cap\kappa$ is stationary'', therefore $M\models``S$ reflects'', so $V_\kappa$ models that as well.
\end{proof}

\begin{definition}
We say that a cardinal $\kappa$ is a \textit{critical cardinal} if it is the critical point of an elementary embedding $j\colon V_{\kappa+1}\to M$, where $M$ is a transitive set. \end{definition}

Let $\cS=\tup{\PP,\sG,\sF}$ be a symmetric system and let $j\colon V\to M$ be an elementary embedding. In \cite{HayutKaragila:Critical} we identified a sufficient condition for the embedding $j$ to be amenably lifted to the symmetric extension given by $\cS$. Namely, if $W$ is the symmetric extension of $V$ given by $\cS$, and $N$ is the symmetric extension of $M$ given by $j(\cS)$, the following condition guarantees that $j\restriction W_\alpha\in W$ for all $\alpha$.

\begin{definition}\label{def:j-decomposable}
Let $\cS=\tup{\PP,\sG,\sF}$ be a symmetric system and let $j\colon V\to M$ be an elementary embedding. We say that $\cS$ is \textit{$j$-decomposable} if the following conditions hold.\footnote{This means that $j(\cS)$ is essentially a two-step iteration of symmetric systems with an $M$-generic for the second iterand.}
\begin{enumerate}
\item There is a condition $m\in j(\PP)$ and a name $\dot\QQ\in\HS_\sF$ such that:
\begin{enumerate}
\item $\pi\colon j(\PP)\restriction m\cong\PP\ast\dot\QQ$, $\pi$ extends the function $j(p)\mapsto\tup{p,1_\QQ}$,
\item $\sym(\dot\QQ)=\sG$,
\item there is $\dot H\in\HS_\sF$ such that $\forces_\PP\dot H\text{ is symmetrically }\check M\text{-generic for }\dot\QQ$.
\end{enumerate}
\item There is a name $\dot\sH$ such that:
\begin{enumerate}
\item $\dot\sH\in\HS_\sF$ and $\forces_\PP\dot\sH\leq\aut(\dot\QQ)$, with $\sym(\dot\sH)=\sG$,
\item there is an embedding $\tau\colon\sG\to\sG\ast\dot\sH$, the generic semi-direct product, such that $\tau(\sigma)$ is given by applying $\pi$ to $j(\sigma)$,
\item $\tau(\sigma)=\tup{\sigma,\dot\rho}$, and $\dot\rho$ is a name such that $\forces_\PP\dot\rho``\dot H=\dot H$.
\end{enumerate}
\item The family $j``\sF$ is a basis for $j(\sF)$.
\end{enumerate}
\end{definition}

The following is Theorem~4.7 in \cite{HayutKaragila:Critical}.
\begin{theorem}[The Basic Lifting Theorem]\label{thm:lifting-criteria}
If $j\colon V\to M$ is an elementary embedding and $\cS$ is a $j$-decomposable symmetric system, then $j$ can be amenably lifted to the symmetric extension defined by $\cS$.
\end{theorem}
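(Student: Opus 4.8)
The plan is to run a Silver-style lifting argument carried out entirely with symmetric names. Fix a $V$-generic $G\subseteq\PP$ and let $W=\HS_\sF^G$ be the resulting symmetric extension. The first step is to manufacture an $M$-generic filter for $j(\PP)$ that contains $j``G$. Working below the master condition $m\in j(\PP)$ and using the isomorphism $\pi\colon j(\PP)\restriction m\cong\PP\ast\dot\QQ$ from part (1a), I would set $H^*=\pi^{-1}(G\ast\dot H^G)$, where $\dot H$ is the name furnished by (1c). As $\dot H$ is symmetrically $M$-generic for $\dot\QQ$ and $G$ is $M$-generic for $\PP$, the filter $H^*$ is $M$-generic for $j(\PP)$ below $m$. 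Since $\pi$ extends $j(p)\mapsto\tup{p,1_\QQ}$, we get $j(p)\in H^*$ exactly when $p\in G$, whence $j``G\subseteq H^*$ and $m\in H^*$. The standard criterion then produces an elementary $j^+\colon V[G]\to M[H^*]$ given by $j^+(\dot x^G)=j(\dot x)^{H^*}$, and $M[H^*]$ carries the symmetric extension $N=\HS_{j(\sF)}^{H^*}$ of $M$ by $j(\cS)$.

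Next I would verify that $j^+$ maps $W$ into $N$ elementarily. For $\dot x\in\HS_\sF$, elementarity gives $j(\dot x)\in j(\HS_\sF)=\HS_{j(\sF)}$ in $M$, so $j^+(\dot x^G)=j(\dot x)^{H^*}\in N$. For elementarity of the restriction I would invoke the symmetric Forcing Theorem: if $W\models\varphi(\dot x^G)$, choose $p\in G$ with $p\forces^\HS\varphi(\dot x)$; applying $j$ and the Symmetry Lemma transports this to $j(p)\forces^\HS\varphi(j(\dot x))$ in $M$ relative to $j(\cS)$, and since $j(p)\in H^*$ we obtain $N\models\varphi(j^+(\dot x^G))$. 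Condition (3), that $j``\sF$ is a basis for $j(\sF)$, is what makes hereditary symmetry and symmetric density cohere between $\cS$ and $j(\cS)$, so that $\HS_{j(\sF)}$ and $N$ are computed as intended.

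The crux, and the step I expect to be the main obstacle, is amenability: that $j^+\restriction W_\alpha\in W$ for every $\alpha$. The decisive point is that $\dot H\in\HS_\sF$, so the $\dot\QQ$-generic $\dot H^G$ already lies in $W$; together with $G$-names this lets $W$ recompute $j(\dot x)^{H^*}$ as $(\pi(j(\dot x)))^{G\ast\dot H^G}$. Fixing a bound $\beta$ on the ranks of names needed to represent $W_\alpha$, and using that $j\restriction V_\beta\in V\subseteq W$ (as holds for the ultrapower embeddings at hand), I would assemble a single $\PP$-name $\dot f_\alpha$ for the graph $\{\tup{\dot x^G,j(\dot x)^{H^*}}\mid\dot x\in\HS_\sF,\ \rank(\dot x)<\beta\}$, so that $\dot f_\alpha^G=j^+\restriction W_\alpha$. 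The remaining task is to show $\dot f_\alpha\in\HS_\sF$.

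Here condition (2) does the work. Given $\sigma\in\sG$, part (2b) supplies $\tau(\sigma)=\tup{\sigma,\dot\rho}$ in the generic semi-direct product $\sG\ast\dot\sH$, obtained by carrying $j(\sigma)$ across $\pi$; by elementarity $j(\sigma\dot x)=j(\sigma)j(\dot x)$, so applying $\sigma$ to $\dot f_\alpha$ merely permutes the first coordinates while acting on the second coordinates through $\tau(\sigma)$. The identity $\dot\rho``\dot H=\dot H$ from (2c) guarantees that this action fixes the generic component $\dot H$, and the Symmetry Lemma for $\PP\ast\dot\QQ$ then shows that $\sigma$ fixes $\dot f_\alpha$ up to the reindexing already accounted for. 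Consequently $\sym_\sG(\dot f_\alpha)\in\sF$, and an induction on rank promotes this to hereditary symmetry, giving $\dot f_\alpha\in\HS_\sF$ and hence $j^+\restriction W_\alpha\in W$. I expect the delicate part to be precisely this last verification---keeping $\sigma$, $j(\sigma)$, $\tau(\sigma)$, and the preservation of $\dot H$ aligned at once---which is exactly the bookkeeping that the refined notion of $j$-decomposable is designed to make manageable.
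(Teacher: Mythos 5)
Your overall skeleton---pull $H^*=\pi^{-1}(G\ast\dot H^G)$ back through the isomorphism, define $j^+$ on evaluations of symmetric names, get elementarity from the symmetric forcing theorem, and get amenability from a name for the graph of $j^+\restriction W_\alpha$ assembled using $\dot H\in\HS_\sF$ and condition (2)---is the same as in the proof this paper cites (Theorem~4.7 of \emph{Critical cardinals}; note the paper does not reprove it here). But your first step contains a genuine error that undermines the foundation you build on. Condition (1c) only gives that $\dot H$ is forced to be \emph{symmetrically} $M$-generic for $\dot\QQ$, i.e.\ it meets the symmetrically dense open sets, not all dense open sets. Consequently $H^*$ is in general \emph{not} an $M$-generic filter for $j(\PP)\restriction m$, there is no generic extension $M[H^*]$ in the usual sense, and ``the standard criterion'' producing an elementary $j^+\colon V[G]\to M[H^*]$ is unavailable: the forcing theorem simply fails for non-symmetric names evaluated by a merely symmetrically generic filter. (In the application in this paper the filter for $\RR$ happens to be fully generic, but the theorem is stated for the general definition, where only symmetric genericity can be arranged---that is the entire point of the notion.) The correct route never mentions $V[G]\to M[H^*]$: one defines $j^+$ \emph{only} on $W=\HS_\sF^G$ by $j^+(\dot x^G)=j(\dot x)^{H^*}$, with target $N=\HS_{j(\sF)}^{H^*}$, and all soundness/elementarity claims go through the equivalence, quoted in the paper as Theorem~8.4 of \cite{Karagila2016}, between $\forces^\HS$ and truth in symmetric extensions by \emph{symmetrically} generic filters. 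This also requires verifying that $H^*$ is symmetrically $M$-generic for $j(\cS)$, which is exactly where condition (3) (or the weakened (3')) is used, via Corollary~4.5 of \cite{HayutKaragila:Critical}; your description of (3) as vaguely making things ``cohere'' misses that it carries a concrete proof obligation here.

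A second, smaller inaccuracy: in the amenability argument, condition (2) alone does not suffice to make the graph-name $\dot f_\alpha$ hereditarily symmetric. For each name $\dot x$, the recomputed $\PP$-name for $j(\dot x)^{H^*}$ (obtained by evaluating the second coordinate of $\pi(j(\dot x))$ along $\dot H$) must itself be shown $\sF$-symmetric, and for this one needs, given $\sym_{j(\sG)}(j(\dot x))\in j(\sF)$, a group $K_0\in\sF$ with $j``K_0\subseteq\sym_{j(\sG)}(j(\dot x))$---precisely condition (3'). Only then does $\sigma\in K_0\cap\sym_\sG(\dot H)$ fix the recomputed name, with (2c) ensuring the evaluation along $\dot H$ is undisturbed. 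So (3') enters the amenability verification itself, not just the genericity bookkeeping; your outline, which credits amenability entirely to condition (2), would not close without it.
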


By analysing the proof, it turns out that we can weaken condition (3) in the definition of $j$-decomposable to the following:
\begin{enumerate}
\item[(3')] For every $K\in j(\sF)$, there is some $K_0\in\sF$ such that $j``K_0\subseteq K$.
\end{enumerate}
So instead of requiring that $j(K_0)\subseteq K$, we simply require that $j``K_0\subseteq K$. This is enough to let the proof of Corollary~4.5 in \cite{HayutKaragila:Critical} work. This provides us with an improved lifting theorem for elementary embeddings.

In both of our applications below, $j(\PP)\cong \PP\times\RR$ for some reasonably well-behaved forcing $\RR$ such that we can find $M$-generic filters for $\RR$ in $V$. In particular, $\dot{\mathbb{Q}}$ in the definition is the canonical name of $\RR$, so it is fixed under any automorphism of $\mathbb{P}$. Moreover, under the isomorphism from $j(\PP)$ to $\PP\times\RR$, $j(p)$ is mapped to $\langle p, 1\rangle$, so (1)(a) holds for $m=1_{j(\PP)}$. This guarantees that (1) will be satisfied.

As these two applications are products of symmetric extensions, for any $\pi\in\sG$, $j(\pi)$ is of the form $\langle \pi, \id_{\RR}\rangle$. This guarantees that (2) will always be satisfied.
\section{Small measurable cardinals}
The first two results will have a similar proof. We begin with a measurable cardinal $\kappa$ and take a symmetric system which preserves measurability using \autoref{thm:jech-measurable}. The two systems we will use are nearly the same, with the first one designed to preserve weakly compact embeddings and therefore $\kappa$ will remain weakly critical. We will always assume $\ZFC+\GCH$ in the ground model so that we do not have to worry about collapsing any cardinals below $\kappa$.
\subsection{The first weakly critical cardinal}
\begin{theorem}\label{thm:weakly-critical}
  Let $\kappa$ be a measurable cardinal. There is a symmetric extension in which $\kappa$ is the least weakly critical cardinal and the least measurable cardinal.
\end{theorem}
In order to kill the weak compactness of cardinals below $\kappa$, we will use the standard forcing for adding a non-reflecting stationary set. Given a regular cardinal $\alpha$ we define a forcing $\QQ_\alpha$ which adds a non-reflecting stationary subset of $S^\alpha_\omega$. A condition $p\in\QQ_\alpha$ is a bounded subset of $S^\alpha_\omega$ such that if $\beta<\sup p$ and $\cf(\beta)>\omega$, then there is a club $c\subseteq\beta$ such that $c\cap p=\varnothing$. We say that $q\leq p$ if $q$ is an end-extension of $p$.

If $S$ is the generic subset added by $\QQ_\alpha$, we claim that $S$ is stationary (it is easy to see that $S$ is non-reflecting). Suppose that $p\forces\dot C$ is a club. We define by recursion a decreasing sequence of conditions, $p_{n+1}\leq p_n\leq p$, such that there is some $\alpha_n>\sup p_n$ for which $p_{n+1}\forces\check\alpha_n\in\dot C$. Let $\alpha=\sup_{n<\omega}\alpha_n$ and let $q=\bigcup_{n<\omega}p_n\cup\{\alpha\}$. Note that if $\beta\leq\alpha$ and $\cf(\beta)>\omega$ then there is some $n$ such that $\beta<\alpha_n$, and therefore $q\cap\beta=p_n\cap\beta$, therefore $q$ is a viable condition in $\QQ_\alpha$. Moreover, $q\forces\check\alpha\in\dot C\cap\dot S$, where $\dot S$ is the canonical name for $S$. So any condition must force that $S$ meets any club, and therefore it is stationary.

We refer the reader to \S6.5 of \cite{Cummings:Handbook} where it is shown that this forcing, while not $\kappa$-closed, is $\kappa$-strategically closed. We also claim that $\QQ_\alpha$ is weakly homogeneous. This is a very simple consequence of the following characterisation of weak homogeneity.

\begin{theorem}\label{thm:homogeneity}
  The following are equivalent for a forcing $\PP$:
  \begin{enumerate}
  \item The Boolean completion of $\PP$ is weakly homogeneous.
  \item If $G$ is a $V$-generic filter, then for every $p\in\PP$ there is a $V$-generic filter $H$ over $\PP$, such that $p\in H$ and $V[G]=V[H]$.
  \end{enumerate}
\end{theorem}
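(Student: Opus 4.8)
The plan is to reduce weak homogeneity of $\BB$ (the Boolean completion of $\PP$) to the triviality of its fixed-point algebra, and then read both implications off that reformulation. Recall that $\BB$ is weakly homogeneous exactly when for all $a,b\in\BB\setminus\{0\}$ there is $\pi\in\aut(\BB)$ with $\pi(a)\wedge b\neq 0$. I would first observe that this holds if and only if the only $\aut(\BB)$-invariant elements are $0$ and $1$: if $c\notin\{0,1\}$ is invariant then $c,\neg c\neq 0$, yet no automorphism can make $\pi(c)\wedge\neg c=c\wedge\neg c\neq 0$; conversely, given $a,b\neq 0$, the element $a^\ast=\bigvee_{\pi\in\aut(\BB)}\pi(a)$ is invariant and nonzero, hence equal to $1$, so $b=b\wedge a^\ast=\bigvee_\pi(b\wedge\pi(a))$ forces $b\wedge\pi(a)\neq 0$ for some $\pi$.

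For (1)$\Rightarrow$(2), fix a generic $G$ and $p\in\PP$. Working in $\BB$, weak homogeneity makes the set $D_p=\{q\in\BB\setminus\{0\}\mid q\leq\pi(p)\text{ for some }\pi\in\aut(\BB)\}$ dense: below any $c$ there is $\pi$ with $\pi(p)\wedge c\neq 0$, and $\pi(p)\wedge c\in D_p$. Since $D_p\in V$ is dense, the generic ultrafilter meets it, so $\pi(p)$ lies in the generic for some $\pi$; then $H=\pi^{-1}[G]$ is $V$-generic, contains $p$, and satisfies $V[H]=V[G]$ because $\pi\in V$. Intersecting with $\PP$ yields the desired filter.

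The substantive direction is (2)$\Rightarrow$(1), which I would prove contrapositively, and which I expect to be the main obstacle. If $\BB$ is not weakly homogeneous, the reformulation yields an invariant $c$ with $0\neq c\neq 1$; choose $p\in\PP$ with $p\leq\neg c$ and a generic $G$ with $c\in G$. I must show that no $V$-generic $H$ with $p\in H$, and hence $c\notin H$, satisfies $V[H]=V[G]$; this is precisely the assertion that membership of an invariant element in the generic is an invariant of the generic extension, not merely of the chosen filter. I would establish it by passing to the fixed-point subalgebra $\BB_0\subseteq\BB$: the quotient $\BB/(G\cap\BB_0)$ is weakly homogeneous over $V[G\cap\BB_0]$, so that intermediate model coincides with $\HOD_V^{V[G]}$ and is therefore definable from $V[G]$ alone. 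Consequently $V[G\cap\BB_0]=V[H\cap\BB_0]$ whenever $V[G]=V[H]$; since $G\cap\BB_0$ is in turn definable over this common intermediate model from $V$ and $\BB_0$, we get $G\cap\BB_0=H\cap\BB_0$, whence $c\in G\iff c\in H$, contradicting the choice of $p$. The delicate points are verifying that the fixed-point quotient is genuinely homogeneous, that the invariant part of the extension is exactly $\HOD_V^{V[G]}$, and that the generic on $\BB_0$ is pinned down by this model; everything else is routine bookkeeping with the Symmetry Lemma and the forcing theorem.
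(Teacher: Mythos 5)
Your reformulation of weak homogeneity as ``the only $\aut(\BB)$-invariant elements are $0$ and $1$'' is correct, and your proof of (1)$\Rightarrow$(2) is correct and standard. Note that the paper does not prove this theorem at all: it cites it, attributing the substance to a theorem of Vop\v{e}nka and H\'ajek (Theorem~1 in \S3.5 of Grigorieff's paper, and Theorem~8 of the authors' earlier paper). That theorem states: if $G,H$ are both $V$-generic on $\BB$ and $V[G]=V[H]$, then there are $u\in H$, $v\in G$ and an isomorphism $\rho\colon\BB\restriction u\to\BB\restriction v$ \emph{lying in $V$} with $\rho[H\restriction u]=G\restriction v$; direction (2)$\Rightarrow$(1) then follows by shrinking $u,v$ to disjoint pieces and extending $\rho$ to a genuine automorphism of $\BB$ by a swap. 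Your attempt replaces this key lemma by the fixed-point-algebra/$\HOD$ argument, and that is where it breaks.

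The gap is in (2)$\Rightarrow$(1), at exactly the two points you flag as ``delicate''. First, the identity $V[G\cap\BB_0]=\HOD_V^{V[G]}$ needs, for the inclusion $\subseteq$, that $G\cap\BB_0$ is definable in $V[G]$ from ordinals and parameters in $V$; you give no such definition, and the natural candidate (``the common value of $F\cap\BB_0$ over all generic $F$ with $V[F]=V[G]$'') presupposes precisely the statement being proved. (The reverse inclusion $\HOD_V^{V[G]}\subseteq V[G\cap\BB_0]$ does follow from your claim that $\BB/(G\cap\BB_0)$ is weakly homogeneous over $V[G\cap\BB_0]$, which is true.) Second, and decisively, the claim that $G\cap\BB_0$ is pinned down by the model $V[G\cap\BB_0]$ is \emph{false} unless $\BB_0$ is rigid: if $\sigma$ is a nontrivial automorphism of $\BB_0$ as an algebra in its own right---such a $\sigma$ cannot extend to $\BB$, since any extension would fix $\BB_0$ pointwise, but nothing prevents it from existing---then choosing $b$ with $d=b\wedge\neg\sigma(b)\neq 0$ (replacing $\sigma$ by $\sigma^{-1}$ if needed) and a $V$-generic $G_0\ni d$ on $\BB_0$, the filter $\sigma[G_0]$ is $V$-generic, differs from $G_0$ at $\sigma(b)$, and satisfies $V[\sigma[G_0]]=V[G_0]$. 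You offer no argument that the fixed-point algebra of $\aut(\BB)$ is rigid, and even granting rigidity, the implication $V[G_0]=V[H_0]\Rightarrow G_0=H_0$ is itself an instance of the Vop\v{e}nka--H\'ajek theorem. So the entire content of the hard direction has been relocated into these two unproved claims, making the argument circular as it stands; to complete the proof you should prove (or cite) the ground-model local-isomorphism theorem and then run the swap-extension argument sketched above.
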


This is a consequence of a theorem of Vop\v{e}nka and H\'ajek. See Theorem~1 in Section~3.5 of \cite{Grigorieff:1975} and Theorem~8 in \cite{HayutKaragila:Restrictions}.

Let $V$ be the ground model and let $\PP\in V$ be the Easton support product of $\QQ_\alpha$, where $\alpha<\kappa$ is an inaccessible cardinal. Our group of automorphisms is the Easton support product of $\aut(\QQ_\alpha)$ (acting pointwise), and our filter of subgroups is generated by subgroups of the form $\prod_{\alpha<\kappa}H_\alpha$, where $H_\alpha$ is a subgroup of $\QQ_\alpha$ for all $\alpha$, and $H_\alpha=\aut(\QQ_\alpha)$ for all but finitely many coordinates.\footnote{We can replace ``finitely'' by any fixed $\lambda<\kappa$.} We let $W$ denote the symmetric extension given by a $V$-generic filter $G$.
\begin{proposition}\label{prop:ess-small-easton}
Suppose that $\dot A\in\HS$ and $p\forces\dot A\subseteq\check\kappa$. Then there is some $\alpha<\kappa$ and $\dot{B}$ such that $p \forces \dot A = \dot B$ and $\dot B$ is a $\PP\restriction\alpha$-name. In particular, every subset of $\kappa$ in $W$ is essentially small, so $\kappa$ remains measurable.
\end{proposition}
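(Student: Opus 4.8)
The plan is to exploit the combinatorial structure of the symmetric names together with the support structure of the Easton product. Recall that $\PP=\prod_{\alpha<\kappa}\QQ_\alpha$ (Easton support, $\alpha$ inaccessible), the automorphism group $\sG$ is the corresponding product of the $\aut(\QQ_\alpha)$, and the filter $\sF$ is generated by the subgroups $\prod_\alpha H_\alpha$ where $H_\alpha=\aut(\QQ_\alpha)$ for all but finitely many $\alpha$. For a name $\dot A\in\HS$ with $p\forces\dot A\subseteq\check\kappa$, its symmetry group $\sym(\dot A)$ contains some fixing group $\fix(E)$ consisting of all automorphisms acting trivially off a finite set $E\subseteq\kappa$; let $\alpha^*=\sup(E)+1<\kappa$.

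First I would argue that $\dot A$ can be taken to depend only on the coordinates below $\alpha^*$, i.e.\ that $\dot A$ is (equivalent to) a $\PP\restriction\alpha^*$-name. The key device is the Symmetry Lemma: for any $\pi\in\fix(E)$ we have $\pi\dot A=\dot A$, so $p\forces(\check\beta\in\dot A)$ is preserved under $\pi$. I would use this to show that whether a given ordinal $\beta$ is forced into $\dot A$ by a condition is insensitive to the part of the condition living on coordinates $\geq\alpha^*$, at least after passing through the homogeneity of those coordinates. Concretely, since each $\QQ_\alpha$ is weakly homogeneous, for coordinates above $\alpha^*$ one can move any condition to a compatible one by an automorphism fixing everything on $E$; combined with $\pi\dot A=\dot A$ this forces the membership statements $\check\beta\in\dot A$ to be decided by the restriction of $p$ to coordinates below $\alpha^*$. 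Hence the Boolean value $\llbracket\check\beta\in\dot A\rrbracket$ lies in the subalgebra generated by $\PP\restriction\alpha^*$, uniformly in $\beta<\kappa$, and one can read off a genuine $\PP\restriction\alpha^*$-name $\dot A'$ with $p\forces\dot A=\dot A'$. Setting $\alpha=\alpha^*$ gives the first assertion.

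Once $\dot A$ is a $\PP\restriction\alpha$-name, the ``essentially small'' conclusion is immediate: in $W$, the set $A=\dot A^G$ lies in $V[G\restriction\alpha]$, and $\PP\restriction\alpha=\prod_{\beta<\alpha}\QQ_\beta$ has size $<\kappa$ because $\alpha<\kappa$, each $\QQ_\beta$ has size $<\kappa$, and under $\GCH$ an Easton-support product below $\kappa$ has cardinality below $\kappa$. Thus $V[A]\subseteq V[G\restriction\alpha]$ is a generic extension by a forcing of size $<\kappa$, so $A$ is essentially small in the sense required. Since every $A\in\power(\kappa)^W$ is essentially small, \autoref{thm:jech-measurable} applies with $W$ in place of the intermediate model, and every measure on $\kappa$ in $V$ extends to a measure in $W$, so $\kappa$ remains measurable.

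The main obstacle I expect is the first step, namely rigorously localising $\dot A$ to coordinates below $\alpha^*$. The subtlety is that $\fix(E)$ only fixes coordinates in $E$, not all coordinates below $\alpha^*$, so one cannot naively discard the coordinates in the interval $[\sup(E)+1,\alpha^*)$; the honest statement is that $\dot A$ depends only on coordinates below $\alpha^*$, and one must combine the symmetry $\pi\dot A=\dot A$ for $\pi\in\fix(E)$ with the weak homogeneity of the tail product $\prod_{\beta\geq\alpha^*}\QQ_\beta$ to neutralise the high coordinates. Care is also needed because $\QQ_\alpha$ is only $\kappa$-strategically closed rather than $\kappa$-closed, but since we only pass to a \emph{bounded} initial segment $\PP\restriction\alpha$ this closure issue does not interfere with the size computation. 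I would therefore present the localisation as the central lemma and treat the size estimate and the invocation of \autoref{thm:jech-measurable} as routine.
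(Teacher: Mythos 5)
Your proposal is correct and takes essentially the same route as the paper: a homogeneity argument using automorphisms that are the identity below a bound $\alpha$ (chosen past the finite exceptional set of a group in $\sym(\dot A)$) and arbitrary above, showing membership in $\dot A$ is decided by restrictions of conditions to $\PP\restriction\alpha$, followed by the routine size estimate and \autoref{thm:jech-measurable}. One wording slip: $\fix(E)$ should consist of automorphisms acting trivially \emph{on} $E$ (and arbitrarily off $E$) --- the group you describe, trivial off $E$, is not in the filter --- but your subsequent use of it is the correct one.
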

\begin{proof}
  This is a standard homogeneity argument. Let $\alpha$ be such that $\sym(\dot A)$ contains $\prod_{\beta<\alpha}\{\id\}\times\prod_{\beta\geq\alpha}\aut(\QQ_\beta)$. If $p\forces\check\xi\in\dot A$, and $q\restriction \alpha \leq p\restriction\alpha$, then there is an automorphism $\pi\in\sym(\dot A)$ such that $\pi p$ is compatible with $q$. This implies that $p\restriction\alpha\forces\check\xi\in\dot A$, and the same holds, of course if $p\forces\check\xi\notin\dot A$. Therefore $\{\tup{p\restriction\alpha,\check\xi}\mid p\forces\check\xi\in\dot A\}$ is a $\PP\restriction\alpha$-name equivalent to $\dot A$.

  For the ``In particular'' part, note that $|\mathbb{P}\restriction \alpha| \leq 2^\alpha < \kappa$.
\end{proof}

It is routine to verify that for every ordinal $\alpha$, $\cf^{V[G]}(\alpha) = \cf^V(\alpha)$, and in particular, every cardinal in $V$ remains a cardinal in $V[G]$. By \autoref{prop:ess-small-easton}, and standard downwards and upwards absoluteness arguments, for every ordinal $\alpha$, $\cf^W(\alpha) = \cf^V(\alpha)$, and the cardinals of $W$ are the same as the cardinals of $V$.

Let us remark that $W_\kappa$ is much smaller than $(V[G])_\kappa$, as a subset of ordinals that can enter $W_\kappa$ must be introduced by a product of finitely many components $\mathbb{Q}_\alpha$. In particular, $W_\kappa\models \neg \AC_\omega$.

It is immediate that every $\alpha<\kappa$ which is inaccessible in $W$, and thus in $V$, has a non-reflecting stationary subset and cannot be weakly critical, by \autoref{cor:weakly-critical-implies-stationary-reflection}. As weakly critical cardinals are inaccessible, the following propositions complete the proof of \autoref{thm:weakly-critical}.
\begin{proposition}\label{prop:no-measurable-below}
There are no measurable cardinals below $\kappa$ in $W$.
\end{proposition}
\begin{proof}
Let $\alpha < \kappa$ be a regular cardinal in the symmetric extension. So, in particular, $\alpha$ is regular in $V$. Let $S\subseteq \alpha$ be the generic non-reflecting stationary set for $\QQ_\alpha$, if $\alpha$ is inaccessible in $V$, and the empty set otherwise. Let us assume, towards a contradiction, that there is a measure $\cU_\alpha$ on $\alpha$ in the symmetric extension. Therefore, in the symmetric extension, one can form the ultrapower $\Ult(V[S], \cU_\alpha)$. As $V[S]$ is a model of $\ZFC$ (even though $\cU_\alpha$ might be external to $V[S]$), we can apply {\L}o\'{s}'s theorem and obtain an elementary embedding $j \colon V[S] \to M$ with critical point $\alpha$.

Let us deal first with the case that $\alpha$ is inaccessible in $V$ and thus $S$ is a generic non-reflecting stationary subset of $\alpha$.

We have that $M=N[j(S)]$ where $N=j(V)$ and $j(S)$ is $N$-generic for $j(\QQ_\alpha)$. In particular, 
there is some $q\in j(\QQ_\alpha)$ such that $S\subseteq q\subseteq j(S)$ and thus there is a club $C\subseteq\alpha$ for which $C\cap q=\varnothing$, $C\in N$. But $N\subseteq W$ so it must be that $C$ is a club in $W$ and $C\cap S=\varnothing$. By \autoref{prop:ess-small-easton}, there is some $\beta$ for which $C$ is added by $\PP\restriction\beta$.

Since $S$ is stationary in $V[S]$, as it is $V$-generic for $\QQ_\alpha$, and $\PP\restriction\alpha$ is $\alpha$-c.c., it must be that $C$ was added by $\PP\restriction[\alpha,\kappa)$. However, as $\PP\restriction(\alpha,\kappa)$ is $\alpha^+$-distributive, it has to be the case that $C$ is added by $\QQ_\alpha$ itself, so $C\in V[S]$, which is a contradiction to the stationarity of $S$.

By $\GCH$, the other possible case is that $\alpha$ was a successor cardinal in $V$. So, as $\alpha$ is the critical point of $j$, $\alpha$ is not a cardinal in $N$. But $N \subseteq W$ and $\alpha$ is a cardinal in $W$ -- a contradiction.
\end{proof}
\begin{proposition}
  $\forces^\HS\check\kappa$ is weakly critical.
\end{proposition}
\begin{proof}
  Work in $V$. Let $j\colon M\to N$ be an elementary embedding, where $M$ and $N$ are $\kappa$-models (so $M, N, j \in V$). Then $j(\PP)=\PP\times\RR$, with $\RR$ being $\kappa$-strategically closed. Using the fact that $N$ is a $\kappa$-model, we can construct an $N$-generic filter for $\RR$ in $V$. All the properties of \autoref{def:j-decomposable} hold for $\PP$, except for (3), since $j(\PP)$ decomposes to a product. We will show that (3') holds, which allows us to apply \autoref{thm:lifting-criteria}.

  Suppose that $B\in j(\sF)$. We write $B_\alpha$ as the projection of $B$ on the $\alpha$th group in the product. Let $E=\{\alpha<j(\kappa)\mid B_\alpha\neq\aut(\QQ_\alpha)\}$. Then $E$ is finite. We have that $B\restriction\kappa=\{\pi\restriction\kappa\mid\pi\in B\}\in\sF$. Therefore, we have that $j``B\restriction\kappa\subseteq B$, even though $j(B\restriction\kappa)$ may be strictly larger than $B$ itself.\footnote{For example, if $B$ is such that $\min E>\kappa$, then $B\restriction\kappa=\sG$ and $B\subsetneq j(B\restriction\kappa)=j(\sG)$.} So, we can lift $j$ to the symmetric extensions of $M$ and $N$ as needed.

  Let $\dot A\in\HS$ be a name for a predicate on $W_\kappa=V_\kappa^W$. Let $M\in V$ be a $\kappa$-model such that $\PP,\sG,\sF,\dot A,\dot W_\kappa\in M$ and $j\colon M\to N$ is an elementary embedding of $\kappa$-models in $V$. Then $j$ lifts to the symmetric extension, and it is clear that the symmetric extension of $M$ contains $\dot A^G$, so $\kappa$ remains weakly critical in $W$.
\end{proof}
\subsection{The first Mahlo cardinal}
\begin{theorem}\label{thm:mahlo}
  Let $\kappa$ be a measurable cardinal. There is a symmetric extension in which $\kappa$ is the least measurable cardinal and the least Mahlo cardinal.
\end{theorem}
We repeat the same proof as before. This time $\QQ_\alpha$ is the standard club shooting forcing to shoot a club through the singulars at every inaccessible cardinal $\alpha<\kappa$. Namely, the conditions are closed and bounded sets of singular ordinals below $\alpha$, ordered by end-extension. By \autoref{thm:homogeneity}, this forcing is weakly homogeneous. The group and filter are defined as before as the Easton and finite support products, respectively. We use $W$ to denote the symmetric extension, as before. It is easy to see that if $\alpha<\kappa$, then $\alpha$ is not Mahlo in $W$ (being weakly Mahlo is downwards absolute from $W$ to $V$, and by $\GCH$ in $V$, weak Mahloness implies Mahloness).

The following closure property of $\mathbb{Q}_\alpha$ is well-known.
\begin{remark}
For every regular cardinal $\gamma < \alpha$, there is a dense subset of $\mathbb{Q}_\alpha$ which is $\gamma$-closed.
\end{remark}
From this closure property, it follows that under $\GCH$ cardinals are not collapsed: Indeed, if a cardinal $\alpha < \kappa$ is collapsed to cardinality $\gamma$, then the witness for the collapse must be added by the iteration up to and including $\gamma$. But the cardinality of this iteration is bounded by $\gamma$. Without $\GCH$ we still have that no inaccessible cardinal in the ground model was collapsed.

The following proposition is proved similarly to \autoref{prop:ess-small-easton}, as the proof only uses the homogeneity of the components of the forcing.

\begin{proposition}
  Every subset of $\kappa$ in $W$ is essentially small with respect to $\kappa$. In particular, $\kappa$ remains measurable in $W$.\qed
\end{proposition}

\begin{proposition}
  $\kappa$ remains a Mahlo cardinal in $W$.
\end{proposition}
\begin{proof}
  Let $A\subseteq\kappa$. Since $A$ is essentially small, in $V[A]$, $\kappa$ remains Mahlo. So if $A$ is a club, in $V[A]$, $A$ contains an inaccessible cardinal. Since $\PP$ preserves cofinalities and $\GCH$, $A$ contains an inaccessible cardinal in the generic extension by $\mathbb{P}$ and by downwards absoluteness -- in $W$ as well.
\end{proof}
The following proposition completes the proof of \autoref{thm:mahlo} by showing that no measurable cardinals below $\kappa$ in $W$. It is proved in exactly the same way as \autoref{prop:no-measurable-below}.
\begin{proposition}
  There are no measurable cardinals below $\kappa$ in $W$.\qed
\end{proposition}
\subsection{The first inaccessible?}
\begin{theorem}\label{thm:failure-of-covering}
  Suppose that the first measurable cardinal is the first inaccessible cardinal. Then there is an inner model with a measurable cardinal of Mitchell order $2$. In particular, it is impossible to start with a single measurable cardinal and construct a model in which the first measurable cardinal is the first inaccessible cardinal.
\end{theorem}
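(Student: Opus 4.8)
The plan is to argue by contraposition through inner model theory: assuming there is no inner model with a measurable cardinal of Mitchell order $2$, I will produce an inaccessible cardinal below $\kappa$ in $V$ and thereby contradict the hypothesis that $\kappa$ is the first inaccessible. Under the anti-large-cardinal assumption the Mitchell core model $K$ (the core model for coherent sequences of measures below $o(\cdot)=2$) exists, satisfies $\ZFC+\GCH$, and enjoys the covering lemma, whose only failures occur along Prikry-type sequences at cardinals that are measurable in $K$. The first task is to run this machinery in the present choiceless setting: $\kappa$ is measurable in $V$ only in the sense that it carries a $\kappa$-complete nonprincipal ultrafilter $U$, and one must check that this is enough for $\kappa$ to be measurable in $K$, so that---by the anti-$o{=}2$ assumption---$o^K(\kappa)=1$. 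Fix a normal measure $W\in K$ on $\kappa$ witnessing $o^K(\kappa)=1$.

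Since $o^K(\kappa)=1$, the measure $W$ concentrates on the set $I$ of $\lambda<\kappa$ that are inaccessible, but not measurable, in $K$; in $K$ this set is stationary in $\kappa$. The key point is that each $\lambda\in I$ is regular and \emph{non-measurable} in $K$, so the covering lemma has no Prikry exception at $\lambda$: covering a hypothetical short cofinal subset of $\lambda$ by a set in $K$ of smaller size shows that $\lambda$ remains regular---hence a cardinal---in $V$. Thus $W$-almost every $\lambda<\kappa$ is a regular cardinal of $V$. As $\lambda<\kappa$ and $\kappa$ is the first inaccessible of $V$, no such $\lambda$ is inaccessible in $V$, so each fails to be a strong limit there: there is $\alpha_\lambda<\lambda$ with $\lambda\preceq\power(\alpha_\lambda)^V$.

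To reach a contradiction I would exploit that $\kappa$ itself is a strong limit in $V$: the map $g(\alpha)=$ the least $\beta<\kappa$ with $\power(\alpha)^V\preceq\beta$ is then a total function $\kappa\to\kappa$, and since $\kappa$ is regular in $V$ its closure points form a club $C$, every element of which is a strong limit in $V$. Intersecting $C$ with the $V$-regular cardinals in $I$ would yield a regular strong limit $\lambda<\kappa$, i.e.\ an inaccessible of $V$ below $\kappa$, which is the desired contradiction. The main obstacle, and the genuinely delicate step, is exactly this intersection in the choiceless context: $C$ is a club computed in $V$, whereas $I$ is only guaranteed to be stationary in $K$, and without a normal measure on $\kappa$ in $V$ one cannot simply press down or appeal to $I$ being $V$-stationary. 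Overcoming this---recovering enough stationarity of $I$ in $V$, or equivalently enough of the covering lemma for clubs, from the fine structure of $K$ together with $U$---is where the core-model input (and the sharpening suggested by Schindler) is needed. Once a single inaccessible of $V$ below $\kappa$ is located the proof is complete, and the concluding ``in particular'' is then immediate, since an inner model with $o(\kappa)=2$ has strictly greater consistency strength than a single measurable.
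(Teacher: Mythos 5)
Your proposal has a genuine gap, and it is exactly the one you flag yourself---but it is worth stressing that this gap is not a technical loose end: it is the entire content of the theorem. You need some $\lambda\in I$ to lie on the $V$-club of $V$-strong limits, but $I$ is large only with respect to a measure $W\in K$, and $K$-measure-one sets have no reason to meet clubs of $V$. Worse, your own intermediate steps show this hope is structurally misplaced: since $\kappa$ is the least inaccessible it is not Mahlo, so $V$ has a club $C\subseteq\kappa$ of $V$-singular ordinals, while your covering step makes every $\lambda\in I$ regular in $V$; hence $I$ is disjoint from $C$, i.e.\ provably \emph{non}-stationary in $V$. So ``recovering stationarity of $I$ in $V$'' from fine structure cannot work; any successful argument must tie the largeness notion to $V$ itself. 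That is precisely what the paper does differently: it never uses an internal $K$-measure, but instead derives the relevant measure from the actual $V$-ultrafilter $U$. One fixes the club $C$ of singulars, passes to $\HOD[C]$, uses Vop\v{e}nka's theorem to force $U\cap\HOD[C]$ over $\HOD[C]$ without changing $K$, forms the ultrapower of $\HOD[C]$ by $U\cap\HOD[C]$, and derives a normal $\HOD[C]$-measure $D$. Now all the needed sets are $D$-large \emph{simultaneously}: $C\in D$ by normality, the $\HOD[C]$-regulars are in $D$ by elementarity, and the non-$K$-measurables are in $D$ because $D\cap K\in K$ (Mitchell's maximality) and $o^K(\kappa)<2$. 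Intersecting yields $\alpha\in C$ that is regular in $\HOD$ but not measurable in $K$; since $\alpha$ is $V$-singular, a singularizing sequence $A\subseteq\alpha$ is generic over $\HOD$, and Mitchell's covering lemma applied inside the $\ZFC$ model $\HOD[A]$ (where $K$ is unchanged) forces $\alpha$ to be measurable in $K$ after all---contradiction. Note the direction: the contradiction is found at a $V$-singular point of $C$, not by manufacturing a $V$-inaccessible below $\kappa$.

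A second gap, which you mention but do not resolve, is the choiceless setting. The theorem has content only for $\ZF$ models (under $\AC$ the least measurable is Mahlo, weakly compact, and much more, so the hypothesis is vacuous), and core-model theory---existence and generic absoluteness of $K$, maximality, covering---is $\ZFC$ technology. It cannot be ``run in the present choiceless setting'' by fiat: even your first task, that $\kappa$ is measurable in $K$, has no direct proof, since $U\cap\HOD$ need not be an element of $\HOD$; your covering step compares $K$ directly with the choiceless $V$; and your function $g$ presupposes that $\power(\alpha)$ injects into an ordinal, which may fail in $\ZF$. The paper's resolution is uniform: every appeal to $K$ is made inside a $\ZFC$ model of the form $\HOD[x]$ for a set of ordinals $x$ (the club $C$, the encoded trace of $U$, a singularizing sequence), using Vop\v{e}nka genericity and the invariance of $K$ under set forcing over $\HOD$; in particular, measurability of $\kappa$ in $K$ is obtained only through the derived measure $D$ and maximality, not by intersecting $U$ with $K$.
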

\begin{proof}
  Towards a contradiction, assume that there is no inner model satisfying $\exists\kappa(o(\kappa)\geq2)$. Let $\kappa$ be the least inaccessible cardinal, which is also the least measurable cardinal. Fix a measure $U$. We do not necessarily assume that $U$ is normal, as there might be no normal measures on $\kappa$ (see \cite{BilinskyGitik2012}).

  Since $\kappa$ is not Mahlo, let $C\subseteq\kappa$ be a club through the singulars. Since every set of ordinals is generic over $\HOD$, the Mitchell core model, $K$, is unchanged between $\HOD$ and $\HOD[C]$. Moreover, we can encode $U\cap\HOD[C]$ as a set of ordinals and by forcing further add it to the universe as well without changing $K$. Let $M$ denote $\HOD[C][U\cap\HOD[C]]$. Then, as we remarked, $K^\HOD=K^M$.

  In $M$, use $U\cap\HOD[C]$ to define an elementary embedding $j\colon\HOD[C]\to W$, where $W$ is some transitive class, and derive a normal $\HOD[C]$-measure, $D$, from this embedding. Moreover, $D\cap K$ is a normal $K$-measure. By the maximality of $K$, $D\cap K\in K$, as shown by Mitchell in \cite{Mitchell:1984}.

  Since $\kappa$ is measurable in $K$ and $K\models o(\kappa)<2$, we have \[\{\alpha<\kappa\mid K\models\alpha\text{ is non-measurable}\}\in D\cap K.\] But since $D$ is a normal measure over $\HOD[C]$ on $\kappa$ and $C$ is a club, that means that $C$ cannot contain any $\alpha$ which is measurable in $K$.

  Finally, as $D \cap K$ is a normal measure, the set $R = \{\alpha < \kappa \mid \cf^{K} (\alpha) = \alpha\} \in D$. So, pick any $\alpha\in C \cap R$. Then, we have a singularising sequence $A\subseteq\alpha$ in $V$, which is generic over $\HOD$. As $K$ remains the same between $\HOD$ and $\HOD[A]$, we have now a model of $\ZFC$ in which $\alpha$ is singular, but regular in $K$. Therefore, by Theorem~2.5 in \cite{Mitchell:CoveringLemmaHB}, $\alpha$ must be measurable in $K$, contrary to the assumption.
\end{proof}
\begin{remark}
  If we can prove that we can find $\alpha$ of uncountable cofinality in $V$ which is $K$-measurable, then we can actually push the consistency strength much higher than just $o(\kappa)\geq 2$.\footnote{Since the writing of this manuscript the authors, joint with Gitik, have shown that in the Mitchell core model $o(\kappa)\geq\kappa+1$, as well as established the consistency of the least measurable cardinal being the least inaccessible cardinal from a supercompact cardinal. See \cite{GHK:2024} for details.}
\end{remark}
\begin{question}
  What is the consistency strength of ``the least inaccessible cardinal is the least measurable cardinal''?
\end{question}
\begin{question}
  Suppose that $\kappa$ is the least inaccessible cardinal and the least measurable cardinal. Is it always the case that $\kappa$ carries a normal measure?
\end{question}
\bibliographystyle{amsplain}
\providecommand{\bysame}{\leavevmode\hbox to3em{\hrulefill}\thinspace}
\providecommand{\MR}{\relax\ifhmode\unskip\space\fi MR }
\providecommand{\MRhref}[2]{%
  \href{http://www.ams.org/mathscinet-getitem?mr=#1}{#2}
}
\providecommand{\href}[2]{#2}

\end{document}